\theoremstyle{definition}
\newtheorem*{defn*}{\protect\definitionname}
\theoremstyle{remark}
\newtheorem*{rem*}{\protect\remarkname}
\theoremstyle{plain}
\newtheorem*{thm*}{\protect\theoremname}
\theoremstyle{plain}
\newtheorem{thm}{\protect\theoremname}
\theoremstyle{remark}
\newtheorem*{claim*}{\protect\claimname}
\gdef\SetFigFontNFSS#1#2#3#4#5{} %Silence pointless warnings due to xfig
\gdef\SetFigFont#1#2#3#4#5{} %Silence pointless warnings due to xfig
\def\clap#1{\hbox to 0pt{\hss#1\hss}}
\definecolor{myblue}{rgb}{0.09,0.32,0.44} %22-84-113
\theoremstyle{remark}
\newtheorem*{qst*}{Question}
\newtheorem*{rmrks*}{Remarks}
\newlength{\tempindent}
\newcommand{\lazyenum}{
\setlength{\tempindent}{\parindent}
\begin{enumerate}[leftmargin=0cm,itemindent=0.7cm,labelwidth=\itemindent,labelsep=0cm,align=left,label=\arabic*)]
\setlength{\parskip}{\smallskipamount}
\setlength{\parindent}{\tempindent}
}
\def\moverlay{\mathpalette\mov@rlay}
\def\mov@rlay#1#2{\leavevmode\vtop{%
   \baselineskip\z@skip \lineskiplimit-\maxdimen
   \ialign{\hfil$\m@th#1##$\hfil\cr#2\crcr}}}
\newcommand{\charfusion}[3][\mathord]{
    #1{\ifx#1\mathop\vphantom{#2}\fi
        \mathpalette\mov@rlay{#2\cr#3}
      }
    \ifx#1\mathop\expandafter\displaylimits\fi}
\renewcommand{\andify}{%
  \nxandlist{\unskip, }{\unskip{} \@@and~}{\unskip{} \@@and~}}
\def\author@andify{%
  \nxandlist {\unskip ,\penalty-1 \space\ignorespaces}%
    {\unskip {} \@@and~}%
    {\unskip \penalty-2 \space \@@and~}%
}
\let\@wraptoccontribs\wraptoccontribs
\def\afs#1#2{\href{#1}{\nolinkurl{#2}}}
\def\afs#1#2{\burlalt{#1}{#2}}
\def\affs#1#2{\href{#1}{\nolinkurl{#2}}}
\providecommand{\claimname}{Claim}
\providecommand{\definitionname}{Definition}
\providecommand{\remarkname}{Remark}
\providecommand{\theoremname}{Theorem}
\newcommand{\R}{\mathbb{R}}
\title[No Riesz basis]{A set with no Riesz basis of exponentials}
\author{Gady Kozma}
\address{GK: The Weizmann Institute of Science, Rehovot, Israel}
\email{gady.kozma@weizmann.ac.il}
\thanks{GK would like to acknowledge the support of the Jesselson Foundation.}
\author{Shahaf Nitzan}
\address{SN: Georgia Institute of Technology, Atlanta, USA}
\email{shahaf.nitzan@math.gatech.edu}
\thanks{SN is supported by NSF-CAREER grant DMS-1847796.}
\author{Alexander Olevski\u\i}
\address{AO: Tel Aviv University, Tel Aviv, Israel}
\email{olevskii@post.tau.ac.il}
\begin{document}

\begin{abstract}
  We show that there exists a bounded subset of $\R$ such that no system of exponentials can be a Riesz basis for the corresponding Hilbert space. An additional result gives a lower bound for the Riesz constant of any putative Riesz basis of the two dimensional disk.
\end{abstract}

\maketitle

\section{Introduction}

A system of vectors $\{u_{\lambda}\}$ in a separable Hilbert space
$H$ is called a basis if every vector $f\in H$ can be represented
by a series
\[
f=\sum c_{\lambda}u_{\lambda}
\]
and the representation is unique. The best kind of basis is the orthonormal
basis. In this paper we are interested in questions revolving around
the existence of bases when $u_{l}$ are taken from a specific, pre-given
set. More specifically, we are interested in the case that $H=L^{2}(S)$
for some $S\subset\mathbb{R}$ or $\mathbb{R}^{d}$, bounded and of
positive measure, and the $u_{\lambda}$ are exponential functions. In this
case it is not always possible to find an orthogonal basis:
If $S$ is an interval then the classic Fourier system is an orthogonal
basis. However, for the union of two intervals it is easy to see that
in general, no exponential orthogonal basis exists, say for
$[0,2]\cup[3,5]$. See \cite{L01} for a full treatment. So one needs
some generalisation of orthogonal bases, which would possess many of
their good properties, but would be more available for constructions.
\begin{defn*}
The image of an orthonormal basis by a linear isomorphism of the space
is called a Riesz basis. Equivalently, a system $\{u_{l}\}$ is Riesz
basis if:
\begin{enumerate}
\item It is complete in $H$;
\item There is a constant $K$ such that for any (finite) sum $P=\sum c_{l}u_{l}$
the following condition holds:
\begin{equation}
\frac{1}{K}||c||^{2}\le||P||^{2}\le K||c||^{2},\label{eq:def Riesz}
\end{equation}
 where $||c||^{2}=\sum |c_{l}|^{2}$.
\end{enumerate}
\end{defn*}
\begin{rem*}
One may ask: is any basis in $H$ a Riesz basis? Babenko \cite{B48}
gave a counterexample. His example was a system of exponentials in
a weighted $L^{2}$ space. Later, it turned out that the weights
defined by Babenko are a specific case of the so-called
Muckenhoupt weights \cite{M72} (we remark that the same condition was
discovered independently and simultaneously by Krantsberg, see
\cite{K72} or \cite[pg.\ 73]{O75}). 

Thus the question we are interested in is as follows. Given an $S\subset\mathbb{R}$,
is there a $\Lambda\subset\mathbb{R}$ such that the system $E(\Lambda)\coloneqq\{e^{2\pi i\lambda t}:\lambda\in\Lambda\}$
is a Riesz basis? It turns out that it is neither easy to construct
Riesz bases, nor to prove that none exist. For the construction problem,
Seip \cite{S95} constructed Riesz bases of exponentials for unions
of two intervals (and some cases of unions of larger numbers of intervals).
Riesz bases for arbitrary finite unions of intervals were constructed
in \cite{KN15}. See \cite{GL14,K15} for a construction of Riesz bases for
multitiling sets, and \cite{LS97} for exponentials with complex
frequencies. See \cite{arx1, arx2, L} for some recent work. Nevertheless, for many natural sets the question
is still open, with famous examples being the ball and triangle in
two dimensions.

In this paper we give an example of a set $S$ for which no Riesz basis
of exponentials exists. The same technique allows us to show that,
even if a Riesz basis of exponentials existed for a two-dimensional
ball, its defining constant $K$ (the $K$ from (\ref{eq:def Riesz}))
cannot be too close to 1. In particular this reproduces Fuglede's
result that the ball has no orthogonal basis of exponentials \cite{F74},
as that would correspond to $K=1$. For a different generalisation of
Fuglede's result, see \cite{IM}.
\end{rem*}

\section{Preliminaries}

Let us start by recalling the Paley-Wiener perturbation theorem.

\begin{thm*}
[Paley and Wiener] Let $S\subset\mathbb{R}^d$ be a bounded set of
positive measure, and let $\Lambda=\{\lambda_{n}\}\subset\mathbb{R}$
such that $E(\Lambda)$ is a Riesz basis for $L^{2}(S)$. Then there
exists a constant $\mu=\mu(S,\Lambda)$ such that if a second sequence
$\Gamma=\{\gamma_{n}\}$ satisfies $|\lambda_{n}-\gamma_{n}|<\mu$
for all $n$, then $E(\Gamma)$ is also a Riesz basis for $L^{2}(S)$.
%
%Further, the constant $\mu$ depends only on the Riesz constant $K$
%from (\ref{eq:def Riesz}) and the diameter of $S$.
\end{thm*}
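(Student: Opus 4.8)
The plan is to reduce the statement to the operator form of the Paley--Wiener stability criterion: \emph{if $\{u_n\}$ is a Riesz basis for a Hilbert space $H$ and $\{v_n\}\subset H$ satisfies
\begin{equation*}
\Bigl\|\sum_n c_n(u_n-v_n)\Bigr\|\le\theta\Bigl\|\sum_n c_n u_n\Bigr\|
\end{equation*}
for some fixed $\theta<1$ and every finitely supported scalar sequence $(c_n)$, then $\{v_n\}$ is also a Riesz basis for $H$.} This criterion is itself a Neumann-series argument, which I would record: the map $T\colon\sum_n c_nu_n\mapsto\sum_n c_nv_n$ extends to a bounded operator with $\|I-T\|\le\theta<1$, hence $T$ is boundedly invertible; writing $\{u_n\}=\{Ve_n\}$ for a bounded invertible $V$ and an orthonormal basis $\{e_n\}$, the system $\{v_n\}=\{TVe_n\}$ is the image of an orthonormal basis under a bounded invertible operator, i.e.\ a Riesz basis (completeness of $\{v_n\}$ being automatic from surjectivity of $TV$). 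Granting this, the entire problem reduces to producing a single $\theta<1$, which then determines the admissible $\mu$.

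With $u_n=e^{2\pi i\langle\lambda_n,t\rangle}$ and $v_n=e^{2\pi i\langle\gamma_n,t\rangle}$, I would manufacture $\theta$ by a power-series expansion of the exponential difference. Since
\begin{equation*}
v_n-u_n=e^{2\pi i\langle\lambda_n,t\rangle}\bigl(e^{2\pi i\langle\gamma_n-\lambda_n,t\rangle}-1\bigr)=\sum_{k\ge1}\frac{(2\pi i)^k}{k!}\,\langle\gamma_n-\lambda_n,t\rangle^k\,e^{2\pi i\langle\lambda_n,t\rangle},
\end{equation*}
summing against $(c_n)$ and interchanging the (finite) $n$-sum with the $k$-series expresses $\sum_n c_n(v_n-u_n)$ as a $k$-series whose $k$-th term expands into monomials $t^{\alpha}$ of total degree $k$ (each bounded by $R^{k}$ on $S$, where $R\coloneqq\sup_{t\in S}|t|<\infty$ because $S$ is bounded) multiplying \emph{frequency-shifted copies} $\sum_n c_n(\gamma_n-\lambda_n)^{\alpha}e^{2\pi i\langle\lambda_n,t\rangle}$ of the original system. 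The \emph{upper} Riesz bound of $E(\Lambda)$ controls the $L^2(S)$-norm of each such copy by $\sqrt{K}\,\mu^{k}\|c\|$, since $|(\gamma_n-\lambda_n)^{\alpha}|\le\mu^{k}$. Summing the $k$-series then yields a bound of the shape $\bigl\|\sum_n c_n(v_n-u_n)\bigr\|\le\sqrt K\,(e^{C_dR\mu}-1)\,\|c\|$, with $C_d$ a constant absorbing $2\pi$ and a dimensional factor. Finally the \emph{lower} Riesz bound gives $\|c\|\le\sqrt K\,\|\sum_n c_nu_n\|$, so one may take $\theta=K(e^{C_dR\mu}-1)$, which is $<1$ precisely when $\mu<\tfrac{1}{C_dR}\log\bigl(1+\tfrac1K\bigr)$; this is the constant $\mu(S,\Lambda)$.

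The main obstacle is obtaining the estimate in exactly this multiplicative form, $\bigl\|\sum c_n(u_n-v_n)\bigr\|\le\theta\bigl\|\sum c_nu_n\bigr\|$ with $\theta<1$, rather than merely a pointwise bound. A crude pointwise estimate such as $|u_n-v_n|\le2\pi R\mu$ is of no use, because one cannot pass from a pointwise bound on the summands to a bound on the $L^2$-norm of their (heavily cancelling) sum. The power-series device is what rescues the argument: it rewrites the perturbation as a convergent superposition of copies of $E(\Lambda)$ itself with reweighted coefficients $c_n(\gamma_n-\lambda_n)^{\alpha}$, each copy being governed by the Riesz property we already have. Everything that remains---justifying the interchange of the $n$- and $k$-summations and the convergence of the resulting series---is immediate for finitely supported $(c_n)$ and small $\mu$.
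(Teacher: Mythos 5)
Your proof is correct. The paper itself does not prove this theorem --- it defers to \cite[\S 2.3]{KN15} --- and your argument (reduce to the Neumann-series stability criterion $\bigl\|\sum c_n(u_n-v_n)\bigr\|\le\theta\bigl\|\sum c_nu_n\bigr\|$ with $\theta<1$, then verify it by expanding $e(\langle\gamma_n-\lambda_n,t\rangle)-1$ in a power series and controlling each reweighted copy $\sum_n c_n(\gamma_n-\lambda_n)^{\alpha}e(\langle\lambda_n,t\rangle)$ by the upper Riesz bound) is precisely the classical Paley--Wiener route taken in that reference, yielding the explicit admissible range $\mu<\frac{1}{C_dR}\log\bigl(1+\frac1K\bigr)$.
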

See, e.g., \cite[\S 2.3]{KN15} for a proof in $d=1$. The proof in higher dimensions is similar.

We say that a set $\Lambda\subset\R$ is uniformly discrete if there
exists some $c>0$ such that if $\lambda\ne\mu$ are both in $\Lambda$
then $|\lambda-\mu|>c$. It is easy to see that any Riesz basis of
exponentials of 
$L^2(S)$ for some $S\subset\R^d$ is uniformly discrete. On the other
hand, any set which is uniformly discrete satisfies the right
inequality in (\ref{eq:def Riesz}), namely
\[
\bigg\Vert\sum_{\lambda\in \Lambda} c_\lambda e(\lambda
x)\bigg\Vert^2\le C\sum_{\lambda\in\Lambda}|c_\lambda|^2.
\]
(This inequality is called Bessel's inequality). See \cite[Proposition
  2.7]{OU16} --- the formulation here follows from the formulation in
the book by a simple duality argument.

Throughout we use the usual notation $e(x)=e^{2\pi ix}$ and $E(\Lambda)=\{e(\lambda x):\lambda\in\Lambda\}$. We use $c$
and $C$ for constants (usually depending on the
Riesz basis involved), whose
value may change from formula to formula and even inside the same
formula. We use $c$ for constants which are `small enough' and $C$
for constants which are `large enough'.

\section{Proofs}

We start with a result of the third named author which remained unpublished, but a version of it was included (with his permission) in the book of Heil \cite[pg.\ 296]{heil} (see also \cite{duklai} for a proof in the more general context of frames, or `overcomplete' bases). We chose to present it here as it provides the simplest demonstration of the `translation' technique which we apply throughout this note.
\begin{thm}
\label{thm:weighted}Let $S\subset\mathbb{R}$ and let $w\in L^1(S)$ be a positive function. If $w$ is not bounded away from $0$ or $\infty$, then there is
no $\Lambda\subset\mathbb{R}$ such that $E(\Lambda)$ is a Riesz
basis for the weighted space $L^{2}(S,w)$.
\end{thm}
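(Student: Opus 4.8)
The plan is to suppose that $E(\Lambda)$ \emph{is} a Riesz basis for $L^2(S,w)$ and to deduce that $w$ is bounded both above and away from $0$, contradicting the hypothesis. The ``translation'' mechanism is the observation that for every $\mu\in\R$ multiplication by $e(\mu x)$ is a \emph{unitary} operator of $L^2(S,w)$ (since $|e(\mu x)|=1$) which carries $E(\Lambda)$ to $E(\Lambda+\mu)$. Concretely, for $f\in L^2(S,w)$ write $g\coloneqq fw$, extended by $0$ off $S$; then $\langle f,e(\lambda\cdot)\rangle_{L^2(S,w)}=\widehat g(\lambda)$, so the frame inequalities of the Riesz basis, applied to the modulated function $e(\mu\cdot)f$ (which has the same $w$-norm as $f$), read
\[
\tfrac1K\int_S|f|^2w\;\le\;\sum_{\lambda\in\Lambda}|\widehat g(\lambda-\mu)|^2\;\le\;K\int_S|f|^2w
\qquad\text{for every }\mu.
\]
The point is that as $\mu$ varies the sampling set of the fixed function $\widehat g$ sweeps across all of $\R$, so averaging over $\mu$ should turn the discrete sum into a Plancherel integral and thereby compare $\int_S|f|^2w$ with $\int_S|f|^2w^2$.

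Carrying this out, I would integrate the displayed inequality over $\mu\in[0,T]$. Writing $s=\lambda-\mu$ and interchanging sum and integral gives $\int_0^T\sum_\lambda|\widehat g(\lambda-\mu)|^2\,d\mu=\int_\R|\widehat g(s)|^2\,\#\bigl(\Lambda\cap[s,s+T]\bigr)\,ds$. Bounding the counting factor below by $n(T)\coloneqq\inf_s\#(\Lambda\cap[s,s+T])$ and above by $N(T)\coloneqq\sup_s\#(\Lambda\cap[s,s+T])$, and using Plancherel in the form $\int_\R|\widehat g|^2=\int_S|f|^2w^2$ (legitimate once $fw\in L^2$), I obtain
\[
\frac{T}{K\,N(T)}\;\le\;\frac{\int_S|f|^2w^2}{\int_S|f|^2w}\;\le\;\frac{TK}{n(T)}.
\]
Because any Riesz basis of exponentials is uniformly discrete, $N(T)<\infty$ for every $T$, which secures the lower bound $c\coloneqq T/(KN(T))>0$; the upper bound $C\coloneqq TK/n(T)$ is finite provided $n(T)>0$, i.e.\ provided $\Lambda$ is relatively dense. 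Granting the latter, I specialise $f=\mathbbm{1}_A$. Taking $A=\{t\le w\le t'\}$ with $t'<\infty$ (so $fw\in L^2$) forces $t\le\int_Aw^2/\int_Aw\le C$ whenever $|A|>0$, so $\operatorname{ess\,sup}w\le C$; taking $A=\{w\le t\}$ (on which $w$ is bounded, so again $fw\in L^2$) forces $\int_Aw^2/\int_Aw\le t$, hence $c\le t$ whenever $|A|>0$, so $\operatorname{ess\,inf}w\ge c$. Either degeneracy of $w$ is thereby excluded.

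The step I expect to be the crux is the relative density of $\Lambda$, which I would extract directly from the lower frame bound. If $\Lambda$ had gaps $(a_n-R_n,a_n+R_n)$ with $R_n\to\infty$, fix once and for all a single $h\neq0$ supported in $S$ with $\int_S|h|^2/w<\infty$ --- for instance $h=w\,\mathbbm{1}_{S\cap\{w\le M_0\}}$, which moreover lies in $L^2(\R)$ --- and apply the lower bound to $g=e(a_n\cdot)h$. Its left side equals the fixed positive constant $\tfrac1K\int_S|h|^2/w$, while its right side is $\sum_{\lambda\in\Lambda}|\widehat h(\lambda-a_n)|^2$, a sum of the fixed Paley--Wiener function $\widehat h$ over the uniformly discrete set $\Lambda-a_n$, every point of which lies outside $(-R_n,R_n)$. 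A localised Plancherel--P\'olya estimate --- using the pointwise bound $|\widehat h(\xi)|^2\le C\int_{\xi-1}^{\xi+1}|\widehat h|^2$ valid for functions of exponential type, together with the separation --- dominates this tail by $C\int_{|s|\ge R_n-1}|\widehat h|^2\to0$, a contradiction. Hence $\Lambda$ has no arbitrarily large gaps and $n(T)>0$ for all large $T$, closing the argument. The only genuinely analytic input is this tail bound; the rest is the bookkeeping of the modulation/averaging identity.
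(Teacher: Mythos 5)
Your route is genuinely different from the paper's. The paper argues entirely in physical space: if $w$ is not bounded away from $0$, a Steinhaus-type argument produces a positive-measure set $A$ with $w<\varepsilon$ on $A$ and a shift $t$ with $w>c$ on $A+t\subset S$; one expands $f=|A|^{-1/2}\mathbbm{1}_A$, multiplies the coefficients by $e(-\lambda t)$, and plays the two Riesz inequalities against each other --- a few lines, with no Fourier-analytic machinery. You instead modulate and average: the identity $\langle e(\mu\cdot)f,e(\lambda\cdot)\rangle_{L^2(S,w)}=\widehat{fw}(\lambda-\mu)$, integrated in $\mu$, converts the frame inequalities into the two-sided bound $T/(KN(T))\le \int_S|f|^2w^2\big/\int_S|f|^2w\le TK/n(T)$, and level sets of $w$ then pin $w$ between positive constants. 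This skeleton is correct: the frame inequalities with constant $K$ do follow from the Riesz-basis definition, the Tonelli exchange and Plancherel steps are fine, and your choices of $A$ keep $fw\in L^2$. It even buys something the paper's proof does not: quantitative bounds on $\operatorname{ess\,inf}w$ and $\operatorname{ess\,sup}w$ in terms of $K$ and the counting functions of $\Lambda$, in the spirit of Landau \cite{L67} and of the frame results of \cite{duklai}. The price is that you must establish $N(T)<\infty$ (uniform discreteness --- true in the weighted setting, though the standard proof should be run with dominated convergence against $4w\in L^1$ rather than boundedness of $S$) and $n(T)>0$ (relative denseness), neither of which the paper needs at all.

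The genuine gap is the lemma you flag as your crux: the pointwise bound $|\widehat h(\xi)|^2\le C\int_{\xi-1}^{\xi+1}|\widehat h|^2$ for Paley--Wiener functions is \emph{false}. If it held for every $F=\widehat h$ with $h\in L^2$ supported in $[-B,B]$, then (taking $\xi=0$, which is no loss by modulating $h$) the functional $h\mapsto F(0)$ would be bounded relative to $\|F\|_{L^2[-1,1]}$, hence of the form $F\mapsto\int_{-1}^{1}F\bar u$ for some $u\in L^2[-1,1]$; unwinding by Fubini, the entire function $\check u(x)=\int_{-1}^{1}u(s)e(sx)\,ds$ would equal $1$ a.e.\ on $[-B,B]$, hence identically, contradicting Riemann--Lebesgue. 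Local $L^2$ mass alone never controls pointwise values of band-limited functions; every correct version of such an estimate carries either a derivative term or a globally weighted tail. Your tail estimate survives once the right tool is substituted: for instance, local Sobolev, $\sup_{[m,m+1]}|F|^2\le C\int_m^{m+1}(|F|^2+|F'|^2)$, together with Bernstein, $\|F'\|_2\le 2\pi B\|F\|_2<\infty$, gives for $\delta$-separated points $\xi_j$ with $|\xi_j|\ge R_n$ the bound $\sum_j|F(\xi_j)|^2\le C_\delta\int_{|s|\ge R_n-1}\bigl(|F|^2+|F'|^2\bigr)\to0$; alternatively write $h=h\chi$ with $\chi\in C_c^\infty$ equal to $1$ near $\operatorname{supp}h$, so $F=F*\widehat\chi$, and use Cauchy--Schwarz with the rapid decay of $\widehat\chi$. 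Either repair needs $h$ compactly supported, so if $S$ is unbounded you should also truncate your $h$ to a bounded piece of positive measure. With that one lemma replaced, your proof closes; as written, the step fails.
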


\begin{proof}
Assume that $w$ is not bounded away from $0$. In this case one may find, for every $\varepsilon>0$,
a subset $A=A(\varepsilon)\subset S$ of positive measure and a number $t=t(\varepsilon)$
such that $w(x)<\varepsilon$ on $A$ but $w(x)>c$ on $A+t$, where
the constant $c$ does not depend on $\varepsilon$.

Examine the function $f=\frac{1}{\sqrt{|A|}}\mathbbm{1}_{A}$. Assume
by contradiction that $E(\Lambda)$ is a Riesz basis for  $L^{2}(S,w)$ with
respect to a constant $K$, and develop $f$ in this basis, i.e.\ write
\[
f(x)=\sum_{\lambda\in\Lambda}c_{\lambda}e(\lambda x).
\]
The Riesz basis property tells us that the series converges in $L^{2}$
and that
\begin{equation}
\sum|c_{\lambda}|^{2}\le K||f||_{L^{2}(S,w)}^{2}\le K\varepsilon,\label{eq:Riesz one}
\end{equation}
by the definition of $A$. Now perform a formal translation (by $-t$)
of the series $\sum c_{\lambda}e(\lambda x)$, namely, consider a
new series with coefficients $d_{\lambda}\coloneqq c_{\lambda}e(-\lambda t)$.
Since $\sum|d_{\lambda}|^{2}=\sum|c_{\lambda}|^{2}<\infty$, the Riesz
basis property gives that the series $\sum d_{\lambda}e(\lambda x)$
converges in  $L^{2}(S,w)$ and the limit, $g$, satisfies
\begin{equation}
||g||_{ L^{2}(S,w)}^{2}\le K\sum_{\lambda\in\Lambda}|d_{\lambda}|^{2}=K\sum_{\lambda\in\Lambda}|c_{\lambda}|^{2}\stackrel{\textrm{(\ref{eq:Riesz one})}}{\le}K^{2}\varepsilon.\label{eq:Riesz 2}
\end{equation}
On the other hand, for every $x\in A+t$ we have
\[
g(x)=\sum_{\lambda\in\Lambda}d_{\lambda}e(\lambda x)=\sum_{\lambda\in\Lambda}c_{\lambda}e(\lambda(x-t))=f(x-t)=\frac{1}{\sqrt{|A|}}
\]
(it is perhaps easiest to consider the equalities as holding almost
everywhere and the sums converging in measure --- we use here that
convergence in $L^{2}$ implies convergence in measure; and that if
a sum converges both in $L^{2}$ and in measure then the two limits
are almost everywhere equal). But this is a contradiction because
then
\[
||g||_{L^{2}(s,w)}\ge\int_{A+t}|g(x)|^{2}w(x)\,dx\ge\int_{A+t}\frac{1}{|A|}c\,dx=c,
\]
contradicting (\ref{eq:Riesz 2}), since $\varepsilon$ was arbitrary.

The case that $w$ is not bounded away from $\infty$ is treated in a similar way.
\end{proof}

Next, we turn to the main result of this note.

\begin{thm}
There exists a bounded set $S\subset\mathbb{R}$ for which no
$\Lambda\subset\mathbb{R}$ may satisfy that $E(\Lambda)$ is a Riesz
basis for $L^{2}(S)$ 
\end{thm}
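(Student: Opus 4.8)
The plan is to build $S$ from infinitely many ``gadgets'' living in disjoint windows, $S=\bigsqcup_{n} S_n$, at scales shrinking fast enough that $S$ stays bounded, and to run the translation technique of Theorem~\ref{thm:weighted} on each gadget separately so as to force the Riesz constant to satisfy $K\ge\kappa_n$ with $\kappa_n\to\infty$. Since a Riesz basis carries a single finite constant $K$ valid on all of $L^2(S)$ at once, the divergence $\kappa_n\to\infty$ would rule out every $E(\Lambda)$, giving the theorem.

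First I would isolate exactly what one translation buys in the unweighted setting. Expanding $f=\sum_\lambda c_\lambda e(\lambda x)$ and setting $d_\lambda=c_\lambda e(-\lambda t)$ produces $g=\sum_\lambda d_\lambda e(\lambda x)$ with $\|g\|^2\le K^2\|f\|^2$, and $g=f(\cdot-t)$ almost everywhere on $S\cap(S+t)$, exactly as in the proof of Theorem~\ref{thm:weighted}. The difficulty — and this is precisely why the unweighted problem is hard — is that a single translate only gives $\|f\|_{L^2(S\cap(S-t))}^2\le K^2\|f\|_{L^2(S)}^2$, which is vacuous, since $S\cap(S-t)\subset S$ and translation preserves Lebesgue measure. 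In the weighted proof the factor $w$ broke this symmetry; with no weight to exploit, a single translate can never produce a contradiction, so the mechanism must be genuinely different.

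The way around this is amplification by many translates simultaneously. I would design each gadget $S_n$ to contain one ``target'' piece $T_n$ together with $N_n$ further pieces that are translates of $T_n$, giving translation amounts $t_1,\dots,t_{N_n}$ each carrying one piece onto $T_n$. Taking $f$ to be a common copy $h$ placed on each of the $N_n$ source pieces, applying the translation step for each $t_j$, and superposing, one obtains a function $G$ with coefficient sequence $c_\lambda\sum_j e(-\lambda t_j)$; the copies add coherently on $T_n$, so $G$ is pinned to be $N_n$ times a single copy there and $\|G\|^2\gtrsim N_n^2$, while the Riesz (Bessel) upper bound controls $\|G\|^2$ by $K\sum_\lambda|c_\lambda|^2\,\big|\sum_j e(-\lambda t_j)\big|^2$. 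The main obstacle is to stop this last quantity from also being of size $N_n^2$: the exponential sum $\sum_j e(-\lambda t_j)$ reaches $N_n$ near the reciprocal lattice of the configuration, and the copies-on-sources function concentrates its frequency content exactly there. The crux is therefore to force the frequency content that the unknown, adversarial basis $E(\Lambda)$ assigns to the test function into the off-resonance regime where $|\sum_j e(-\lambda t_j)|=O(1)$; once that cancellation is secured one gets $N_n^2\lesssim K^2 N_n$, i.e. $K\gtrsim\sqrt{N_n}$.

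The genuinely delicate part, which I expect to occupy most of the work, is making this cancellation robust against every candidate $\Lambda$ and at every scale at once. I would handle it by choosing the gadget geometries with mutually incommensurable periods and well-separated frequency scales, so that no single $\Lambda$ can resonate with the configurations at all (or even at many) scales, and by replacing the heuristic coherent/incoherent dichotomy above with quantitative estimates for the off-resonance sum, using the right inequality in~\eqref{eq:def Riesz} together with the uniform discreteness of any Riesz basis. Finally, the same single-scale estimate, applied to the two-dimensional disk in place of a gadget, should yield an explicit lower bound for $K$; specializing to $K=1$ then recovers Fuglede's theorem that the disk admits no orthogonal basis of exponentials.
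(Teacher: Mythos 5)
Your skeleton --- a multiscale set, one gadget per scale forcing $K\ge\kappa_n$ with $\kappa_n\to\infty$ --- matches the paper's, and your diagnosis that a single translation is vacuous in the unweighted setting is exactly right. But the single-scale mechanism you propose (coherent superposition of $N=N_n$ translates plus an ``off-resonance'' bound $\bigl|\sum_j e(-\lambda t_j)\bigr|=O(1)$ on the frequencies carrying the coefficient mass) has a gap that cannot be repaired: that off-resonance condition is not merely hard to force against an adversarial $\Lambda$ --- it is \emph{provably false} whenever a Riesz basis exists, so the ``crux'' lemma to which you defer the difficulty is equivalent to the theorem itself. Keep your own notation: $f$ is the sum of $N$ disjoint copies of $h$ on the source pieces, $f=\sum c_\lambda e(\lambda x)$, and $G$ has coefficients $d_\lambda=c_\lambda\sum_j e(-\lambda t_j)$. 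Your argument gives $\|G\|_{L^2(S)}^2\ge\|G\|_{L^2(T_n)}^2=N^2\|h\|^2$, the upper Riesz bound gives $\|G\|_{L^2(S)}^2\le K\sum_\lambda|c_\lambda|^2\bigl|\sum_j e(-\lambda t_j)\bigr|^2$, and expanding $f$ gives $\sum_\lambda|c_\lambda|^2\le K\|f\|^2=KN\|h\|^2$. Combining,
\[
\frac{\sum_\lambda |c_\lambda|^2\,\Bigl|\sum_j e(-\lambda t_j)\Bigr|^2}{\sum_\lambda|c_\lambda|^2}\;\ge\;\frac{N^2\|h\|^2/K}{KN\|h\|^2}\;=\;\frac{N}{K^2},
\]
i.e.\ for \emph{any} Riesz basis the coefficient mass automatically concentrates where the exponential sum has size at least $\sqrt{N}/K$: resonance is forced, scale by scale, by the very inequalities you want to exploit. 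For the same reason the multiscale/incommensurability fix cannot rescue the plan: with well-separated scales the resonance constraints at different $n$ concern disjoint frequency regions, so they are compatible rather than conflicting, and nothing stops a single uniformly discrete $\Lambda$ from resonating at every scale simultaneously --- indeed, as just shown, any putative basis must.

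The paper escapes this trap by amplifying on the space side rather than the amplitude side: it never superposes the translates. It keeps the $n-1$ translates $G_k(x)=g_{i_0}(x-k\epsilon)$ separate and applies the \emph{lower} Riesz inequality to each one, forcing every translate to carry mass at least $1/(2K^2)$ inside $S$; the work then goes into geometry rather than cancellation. Specifically: (a) by Paley--Wiener one may assume $\Lambda\subset\frac1\ell\mathbb{Z}$, so all extensions are $\ell$-periodic and $\int_0^\ell|g_{i_0}|^2\le K^2$ is a finite budget; (b) a pigeonhole claim over the $4^{4^n}$ candidate test intervals selects one index $i_0$ whose smoothed extension $g_{i_0}=\widetilde{f_{i_0}}*h$ is tiny near the left edges of the long intervals, so each translate must deposit its mandatory $1/(2K^2)$ in the region $I$ of short intervals; (c) the sets $I-k\epsilon$, $k=1,\dotsc,n-1$, are pairwise disjoint. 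Summing disjoint lower bounds against the finite budget yields $K^2\ge(n-1)\bigl(1/(2K^2)-o(1)\bigr)$, a contradiction as $n\to\infty$, with no exponential-sum estimate needed anywhere. The ingredient your proposal is missing is exactly this bookkeeping --- lower Riesz bound on each translate separately, plus a selection argument guaranteeing their masses land in disjoint sets --- and without it (or something like it) the coherent-superposition route cannot be completed.
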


It might be interesting to remark already at this point that the set
is an infinite collection of intervals with one accumulation point. 

For the proof, we will use the following auxiliary claim.
\begin{claim*}
Let $A\subset\R$ be a set of positive finite measure, and let $\{g_i\}_{i= 1}^N\subset L^2(A)$. If for every $U\subset\{1,\dotsc,N\}$ we have
\begin{equation}\label{linear comb}
\bigg|\sum_{i\in U}g_i(x) \bigg|\leq M\sqrt{|U|},\qquad \forall x\in A,
\end{equation}
then there exists some $i_{0}$ such that
\[
\int_{A}|g_{i_{0}}|^{2}\le\frac{6M^2|A|}{\sqrt{N}}.
\]
\end{claim*}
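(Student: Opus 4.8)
The plan is to bound $\min_i \int_A |g_i|^2$ by controlling $\sum_{i=1}^N |g_i(x)|^2$ pointwise and then averaging over $i$. The strategy rests on converting the subset-sum hypothesis (\ref{linear comb}) into two pointwise estimates on the vector $(g_1(x),\dots,g_N(x))$: a trivial $\ell^\infty$ bound and a nontrivial $\ell^1$ bound. The $\ell^\infty$ bound is immediate, since applying (\ref{linear comb}) to the singleton $U=\{i\}$ gives $|g_i(x)|\le M$ for every $i$ and every $x\in A$.

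The heart of the argument is the pointwise $\ell^1$ bound $\sum_i |g_i(x)|\le C M\sqrt N$. First I would treat real-valued $g_i$: fixing $x$, let $P=\{i:g_i(x)>0\}$ and $Q=\{i:g_i(x)<0\}$, so that $\sum_i |g_i(x)| = \big|\sum_{i\in P} g_i(x)\big| + \big|\sum_{i\in Q} g_i(x)\big|$. Applying (\ref{linear comb}) to $U=P$ and $U=Q$, and using $\sqrt{|P|}+\sqrt{|Q|}\le\sqrt{2N}$, yields $\sum_i |g_i(x)|\le \sqrt2\,M\sqrt N$. For complex-valued $g_i$ I would run the same argument on $\operatorname{Re} g_i$ and $\operatorname{Im} g_i$ separately --- note that (\ref{linear comb}) bounds $\big|\sum_{i\in U}\operatorname{Re} g_i(x)\big|$ and $\big|\sum_{i\in U}\operatorname{Im} g_i(x)\big|$ by $M\sqrt{|U|}$ as well --- and then add, obtaining $\sum_i |g_i(x)|\le 2\sqrt2\,M\sqrt N$. (Alternatively, one averages over a rotation angle $\theta$ and takes $U$ to be the indices $i$ with $\operatorname{Re}(e^{-i\theta} g_i(x))>0$, which recovers the same type of bound.)

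Combining the two estimates pointwise gives
\[
\sum_{i=1}^N |g_i(x)|^2 \le \Big(\max_i |g_i(x)|\Big)\sum_{i=1}^N |g_i(x)| \le M\cdot C M\sqrt N = C M^2\sqrt N
\]
for every $x\in A$, with $C\le 2\sqrt2<6$. Integrating over $A$ shows $\sum_{i=1}^N \int_A |g_i|^2 \le C M^2\sqrt N\,|A|$, so dividing by $N$ the average of $\int_A |g_i|^2$ over $i$ is at most $C M^2 |A|/\sqrt N$; hence some index $i_0$ lies below the average, giving $\int_A |g_{i_0}|^2 \le 6 M^2 |A|/\sqrt N$.

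The step I expect to be the main obstacle --- really the only nonroutine idea --- is extracting the pointwise $\ell^1$ bound from (\ref{linear comb}). The hypothesis superficially controls only cancellation-prone subset sums, and the trick is that one is free to choose the subset $U$ \emph{depending on} $x$ so as to collect the values $g_i(x)$ of a common sign (or lying in a common half-plane), thereby turning a sum with possible cancellation into nearly the full $\ell^1$ mass. The crucial gain of the factor $\sqrt N$ then comes precisely from the $\sqrt{|U|}$ on the right-hand side of (\ref{linear comb}) together with $|U|\le N$; everything after that is Cauchy--Schwarz-type bookkeeping and pigeonholing.
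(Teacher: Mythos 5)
Your proof is correct and is essentially the same argument as the paper's: both convert the subset-sum hypothesis into a pointwise $\ell^1$ bound by choosing $U$ to collect the values $g_i(x)$ pointing in a common direction (you split by the signs of the real and imaginary parts; the paper pigeonholes into three sectors of opening $120^{\circ}$), then combine this with the $\ell^\infty$ bound $|g_i(x)|\le M$ and average over $i$. The differences are cosmetic --- yours runs directly rather than by contradiction at a single bad point $x_0$, and it yields the slightly better constant $2\sqrt{2}$ in place of $6$.
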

\begin{proof}
Assume that $\int_{A}|g_{i}|^{2}>\delta$ for some $\delta$ and all
$i$. We get
\[
\sum_{i}\int_{A}|g_{i}(x)|^{2}\,dx>\delta N.
\]
Hence there exists some $x_{0}$ such that
\[
\sum_{i}|g_{i}(x_{0})|^{2}>\frac{\delta N}{|A|}
\]
Condition (\ref{linear comb}) implies in particular that
$
|g_{i}(x_{0})|\le M
$
for all $i$ 
and so
\[
\sum_{i}|g_{i}(x_{0})|\geq \frac{1}{M}\sum_{i}|g_{i}(x_{0})|^2>\frac{\delta N}{|A|M}.
\]
Let $U_{1}$, $U_{2}$, $U_{3}$ be defined by
\[
U_{j}=\left\{ i:\arg g_{i}(x_{0})\in \left[2\pi\frac{j-1}{3},2\pi\frac{j}{3}\right)\right\} .
\]
Hence for some $j$,
\[
\sum_{i\in U_{j}}|g_{i}(x_{0})|>\frac{\delta N}{3|A|M}.
\]
Denote $U\coloneqq U_{j}$ for this $j$ for short. If a collection
of complex numbers $c_{i}$ is in a sector of opening $120^{\circ}$
then $|\sum c_{i}|\ge\frac{1}{2}\sum|c_{i}|$ and hence
\[
\bigg|\sum_{i\in U}g_{i}(x_{0})\bigg|\ge\frac{\delta N}{6|A|M}.
\]
Since $|U|\leq N$, combining this with (\ref{linear comb}) gives the required result.
\end{proof}

We are now ready to prove Theorem 2.

\begin{proof} 
We start with $S_{1}=[0,1]\cup[2,3]$. We then break the interval
$[2,3]$ into $4^{4^{2}}+1$ intervals of equal length, and keep for
each one only the left half (this is $S_{2}$). We next take the last
interval of $S_{2}$, break it into $4^{4^{3}}+1$ intervals of equal
length, and keep for each one only the left third. We continue this
way and denote $S=\bigcap S_{i}$.

Assume by contradiction that $S$ has a Riesz basis $E(\Lambda)$ of
exponentials. By the Paley-Wiener theorem we may assume without loss
of generality that $\Lambda\subset\frac{1}{\ell}\mathbb{Z}$ for some
integer $\ell\ge 4$. Let $K$
be a Riesz constant for $\Lambda$, i.e.\ every $f\in L^{2}(S)$ can be expanded
into a sum
\[
f(x)=\sum_{\lambda\in\Lambda}c_{\lambda}e(\lambda x)
\]
such that
\[
\frac{1}{K}\sum|c_{\lambda}|^{2}\le\int_{S}|f|^{2}\le K\sum|c_{\lambda}|^{2}.
\]
Assume for simplicity that $K\ge\ell$.

Fix some $n$. The construction of $S$ implies that one may find
$4^{4^{n}}$ intervals $I_{1},\dotsc,I_{4^{4^{n}}}$ of equal length
(call it $\epsilon$) and distance between them $\epsilon(n-1)$.
For $i=1,\dotsc,4^{4^n}$ let
\[
f_{i}=\frac{1}{\sqrt{\epsilon}}\mathbbm{1}_{I_{i}}
\]
so that $||f_{i}||_{L^{2}(S)}=1$. Expand $f_{i}(x)=\sum c_{\lambda,i}e(\lambda x)$.
Since $\Lambda\subset\frac{1}{\ell}\mathbb{Z}$, this sum converges in $L^{2}[0,\ell]$ and the limit is an extension
of $f$ to the whole of $L^{2}[0,\ell]$ (to the whole of $\mathbb{R}$,
if you prefer). Call these extensions $\widetilde{f_{i}}$ and note that
the Riesz basis property gives that $\sum|c_{\lambda,i}|^{2}\le K$
and hence
\[
||\widetilde{f_{i}}||_{L^{2}([0,\ell])}^{2}\le \ell K\le K^{2}.
\]
A similar argument shows that for any coefficients $\beta_{i}$
\begin{equation}
\bigg\Vert\sum_{i}\beta_{i}\widetilde{f_{i}}\bigg\Vert_{L^{2}([0,\ell])}^{2}\le K^{2}\sum|\beta_{i}|^{2}.\label{eq:sum bi fi}
\end{equation}
Let us further define
\[
h=\frac{4}{\epsilon}\mathbbm{1}_{[-\epsilon/8,\epsilon/8]}
\]
so that $||h||_{1}=1$. Let $g_{i}=\widetilde{f_{i}}*h$. By the Cauchy-Schwarz inequality, for every $U\subset\{1,\dotsc,4^{4^n}\}$ we have
\[
\bigg|\sum_{i\in U}g_{i}(x)\bigg|
\le\bigg\Vert\sum_{i\in U}\widetilde{f_{i}}\bigg\Vert_{L^2[0,\ell]}\Vert h\Vert_{2}
\stackrel{\textrm{(\ref{eq:sum bi fi})}}{\le}
K\sqrt{|U|}\cdot\frac{2}{\sqrt{\epsilon}}\qquad\forall x\in\R.
\]
Hence the functions $g_i$ satisfy condition (\ref{linear comb}) with $M=2K/\sqrt{\epsilon}$.

Further, enumerate all intervals comprising $S$ which have length
strictly bigger than $\epsilon$ as $[a_{1},b_{1}],\dotsc,[a_{e},b_{e}]$ where
$e=\sum_{k<n}4^{4^{k}}\le2\cdot4^{4^{n-1}}$. Let
\[
A\coloneqq\bigcup_{i=1}^{e}[a_{i}-n\epsilon,a_{i}+\epsilon].
\]
($A$ is the set of left edges of these intervals). Then $|A|=e(n+1)\epsilon\leq 2(n+1)\epsilon4^{4^{n-1}}$. By the auxiliary claim (with $N=4^{4^n}$ and $M$ as above) there exists an $i_{0}$ such that
\[
\int_{A}|g_{i_{0}}|^{2}\le\frac{6M^2|A|}{\sqrt{N}}\le \frac{CK^{2}n}{4^{4^{n-1}}}.
\]
With $i_0$ selected and this crucial property of $g_{i_0}$ proved, the theorem follows by examining a few translations of $g_{i_0}$.

To see this, expand $f_{i_0}$ using the Riesz
basis property and denote the coefficients by $c_{\lambda}$ so that
$f_{i_{0}}(x)=\sum c_{\lambda}e(\lambda x)$, and recall that we have
extended $f_{i_{0}}$ so that a similar equality holds also for $\widetilde{f_{i_0}}$ over all of $\mathbb{R}$. Hence
\[
g_{i_{0}}(x)=\sum_{\lambda\in\Lambda}c_{\lambda}\widehat{h}(\lambda)e(\lambda x).
\]
So $g_{i_{0}}$ is supported spectrally on $\Lambda$ and therefore
\begin{equation}
\sum|c_{\lambda}\widehat{h}(\lambda)|^{2}\ge\frac{1}{K}\int_{S}|g_{i_{0}}|^{2}\ge\frac{1}{2K},\label{eq:ah large}
\end{equation}
where the last inequality follows from the definition of $g_{i_0}$ as
$\widetilde{f}_{i_0}*h$ which allows to calculate $g_{i_0}$ in the
middle half of the interval $I_{i_0}$. On the other hand, the Plancharel formula on $[0,\ell]$ and the Riesz
property imply that
\begin{equation}
\int_{0}^{\ell}|g_{i_{0}}|^{2}=\ell\sum|c_{\lambda}|^{2}|\widehat{h}(\lambda)|^{2}\le K\sum|c_{\lambda}|^{2}\le K^{2}\int_{S}|f_{i_0}|^{2}=K^{2}.\label{eq:gi0 small}
\end{equation}
Consider the following $n-1$ translations of $g_{i_{0}}$:
\[
G_{k}(x)=g_{i_{0}}(x-k\epsilon)=\sum_{\lambda\in\Lambda}c_{\lambda}\widehat{h}(\lambda)e(-\lambda k\epsilon)e(\lambda x).
\]
Each $G_{k}$ is also supported spectrally on $\Lambda$ and hence
\[
\int_{S}|G_{k}|^{2}\ge\frac{1}{K}\sum|c_{\lambda}\widehat{h}(\lambda)|^{2}\stackrel{\textrm{(\ref{eq:ah large})}}{\ge}\frac{1}{2K^{2}}.
\]
Consider now an interval $J$ whose length is larger than $\epsilon$. Since $\widetilde{f}_{i_0}$ is zero on $J$, we have that $g_{i_0}$ is zero except on the $\frac18\epsilon$ edges of $J$. Hence $G_{k}|_{J}$, $k\ge 1$ is zero on most of the interval, except for its left end, where $G_{k}$ is a translation of $g_{i_{0}}$
on some part of $A$. Therefore
\[
\sum_{J:|J|>\epsilon}\int_{J}|G_{k}|^{2}\le\int_{A}|g_{i_{0}}|^{2}\le\frac{CK^{2}n}{4^{4^{n-1}}}.
\]
This means that, if $I$ is the union of all intervals of length $\epsilon$
or less (all the $I_{i}$ and then the shorter intervals) then
\[
\int_{I}|G_{k}|^{2}\ge\frac{1}{2K^{2}}-\frac{CK^{2}n}{4^{4^{n-1}}}
\]
and since $G_{k}$ is a translation of $g_{i_{0}}$ we get
\[
\int_{I-\epsilon k}|g_{i_{0}}|^{2}\ge\frac{1}{2K^{2}}-\frac{CK^{2}n}{4^{4^{n-1}}}.
\]
Since the sets $I-\epsilon k$ are disjoint, for $k=1,\dotsc,n-1$,
we get
\[
K^{2}\stackrel{\textrm{(\ref{eq:gi0 small})}}{\ge}
\int_{0}^{\ell}|g_{i_{0}}|^{2}
\ge(n-1)\left(\frac{1}{2K^{2}}-\frac{CK^{2}n}{4^{4^{n-1}}}\right).
\]
Since $n$ was arbitrary, we have reached a contradiction.
\end{proof}
Our last result is that, even if a Riesz basis of exponentials would
exist for the disk, its constant cannot be too close to 1. The way
we defined the Riesz constant in (\ref{eq:def Riesz}), though, makes
it easy to compare to orthonormal bases only when $|S|=1$ (otherwise
even an orthogonal basis of exponentials would not be normalised and
would not satisfy (\ref{eq:def Riesz}) with $K=1$). For simplicity we
let $D=\frac{1}{\sqrt{\pi}}\mathbb{D}$ i.e.\ a disk with area 1, and
prove the result for it. 

\begin{thm}
Any Riesz basis of exponentials for $D$ must have $K\ge\sqrt{\frac{1+\sqrt{5}}{2}}$.
\end{thm}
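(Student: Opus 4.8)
The plan is to dualise the hypothesis into a sampling statement in the Paley--Wiener space and to run the translation technique there. Let $PW_D$ denote the image of $L^2(D)$ under the Fourier transform, i.e.\ the functions on $\R^2$ whose inverse transform is supported in $D$. The Fourier transform is unitary from $L^2(D)$ onto $PW_D$; it sends $e_\lambda$ to the unit-norm reproducing kernel $k_\lambda=\widehat{\mathbbm 1_D}(\cdot-\lambda)$ and sends $g\in L^2(D)$ to a function $f$ with $f(\lambda)=\langle g,e_\lambda\rangle_{L^2(D)}$. Because the frame bounds of a Riesz basis coincide with its Riesz bounds, the definition \eqref{eq:def Riesz} together with completeness is equivalent to the pair of sampling inequalities
\[
\frac1K\|f\|_{L^2(\R^2)}^2\le\sum_{\lambda\in\Lambda}|f(\lambda)|^2\le K\|f\|_{L^2(\R^2)}^2,\qquad f\in PW_D,
\]
where $\|e_\lambda\|_{L^2(D)}^2=|D|=1$ was used. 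This is the right arena for the method of this paper: translating $f$ in the frequency variable, $f\mapsto f(\cdot-u)$, only modulates its inverse transform, so it keeps $f$ in $PW_D$ and preserves its norm, and the inequalities above then apply to every translate at once.

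Next I would fix a test function $f\in PW_D$ and form the periodisation $\Phi(u)=\sum_{\lambda\in\Lambda}|f(\lambda-u)|^2=\sum_{\lambda}G(\lambda-u)$, where $G=|f|^2\ge0$ is band-limited to the difference set $D-D=2D$, a disk of radius $2/\sqrt\pi$. Applying the sampling inequalities to each translate $f(\cdot-u)$ gives $\tfrac1K\|f\|^2\le\Phi(u)\le K\|f\|^2$ for all $u$, and hence the clean consequence
\[
\frac{\max_u\Phi(u)}{\min_u\Phi(u)}\le K^2 .
\]
Thus it suffices to exhibit, for the given $\Lambda$, a single test function whose periodisation has peak-to-trough ratio at least $\tfrac{1+\sqrt5}2$; this forces $K^2\ge\tfrac{1+\sqrt5}2$, which is the theorem. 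Note that $K=1$ would force $\Phi$ constant for every $f$, i.e.\ a tiling of the frequency plane by translates of $D$, which is impossible by Fuglede; the task is precisely to quantify this impossibility.

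The heart of the argument is therefore an extremal problem: a nonnegative function band-limited to $2D$, periodised over a set of density $1$, cannot be made too flat, with golden-ratio oscillation being the worst case for the disk. Here the geometry enters decisively. Since $E(\Lambda)$ is an exact (simultaneously sampling and interpolating) system, Landau's theorem pins the density of $\Lambda$ to $|D|=1$, so its dual spectrum behaves like a set of covolume $1$; by Minkowski's theorem it contains a nonzero frequency $\gamma$ with $|\gamma|\le(2/\sqrt3)^{1/2}<2/\sqrt\pi$, hence $\gamma\in\supp\widehat G$. The nonconstant part of $\Phi$ is carried by such frequencies (for a lattice this is literally Poisson summation, $\Phi(u)=\sum_{\gamma}\widehat G(\gamma)e(-\gamma u)$), so $\Phi$ cannot be flat; optimising $f$ against the Bessel profile $\widehat{\mathbbm 1_D}(\xi)=2J_1(2\sqrt\pi|\xi|)/(2\sqrt\pi|\xi|)$ of the disk is what produces the sharp ratio $\tfrac{1+\sqrt5}2$.

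The main obstacle is exactly this last optimisation: identifying the extremal test function and computing, uniformly over all admissible $\Lambda$, the least forced peak-to-trough ratio of its periodisation, and checking that the disk's geometry makes this ratio the golden ratio rather than something smaller. Two auxiliary points must be handled with care: passing from the transparent lattice case (Poisson summation) to general density-one $\Lambda$, for which $\Phi$ is almost periodic and its oscillation must be controlled through the almost-periodic (diffraction) spectrum; and verifying the numerical inequality $2/\sqrt\pi>(2/\sqrt3)^{1/2}$, which is the precise reason a usable dual frequency always lies inside $\supp\widehat G$.
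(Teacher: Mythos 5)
Your reduction is correct as far as it goes: a Riesz basis with constant $K$ is a frame with the same bounds, so the sampling inequalities in $PW_D$ hold, translation in the sampled variable preserves $PW_D$ and the norm, and therefore the periodisation $\Phi(u)=\sum_\lambda|f(\lambda-u)|^2$ of any $f\in PW_D$ satisfies $\max\Phi/\min\Phi\le K^2$. But this is only a reformulation of the problem. The entire content of the theorem is what you defer to the end as ``the main obstacle'': exhibiting, \emph{uniformly over every admissible} $\Lambda$ (known only to be uniformly discrete and of Landau density $1$), a test function whose periodisation has peak-to-trough ratio at least $\frac{1+\sqrt5}{2}$. You do not carry out this step, and the heuristics you offer for it do not survive scrutiny: Minkowski's theorem and Poisson summation apply only when $\Lambda$ is a lattice; for general $\Lambda$ there is no ``dual spectrum'' in which to locate a short frequency $\gamma$, and even in the lattice case $\gamma\in\supp\widehat G$ (i.e.\ $|\gamma|<2/\sqrt\pi$) does not give $\widehat G(\gamma)\ne 0$, let alone a quantitative lower bound on the oscillation of $\Phi$. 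Your proposal is thus a plausible research programme, not a proof.

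It is also worth noting that the golden ratio does not arise in the paper from any extremal problem involving the Bessel profile of the disk, so the target you set yourself is likely not even the right one. The paper's argument stays entirely on the physical side, where the geometry is known (it is the disk $D$, not the unknown set $\Lambda$): one expands the normalised indicator $f=\frac{1}{\sqrt\pi\,\varepsilon}\mathbbm{1}_{\varepsilon\mathbb{D}}$ in the basis and formally translates the series. Translating by $t_\theta$ with $|t_\theta|=\frac1{\sqrt\pi}-\varepsilon$ keeps the bump inside $D$, and since the bump alone contributes $1$ to the norm, the Riesz upper bound $K^2$ leaves at most $K^2-1$ of mass for $(D-t_\theta)\setminus D$; translating by $s_\theta$ with $|s_\theta|=\frac1{\sqrt\pi}+\varepsilon$ pushes the bump out of $D$, and the Riesz lower bound forces mass at least $1/K^2$ into $(D-s_\theta)\setminus D$. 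Hence the thin sliver $(D-s_\theta)\setminus(D-t_\theta)$ carries mass at least $\frac1{K^2}-(K^2-1)$, which is positive exactly when $K^2<\frac{1+\sqrt5}{2}$ --- this algebra, not any optimisation against $J_1$, is the source of the golden ratio. Integrating over $\theta$, the slivers overlap with multiplicity $O(\sqrt\varepsilon)$, so the extended $f$ must have $L^2$ norm at least $c/\sqrt{\varepsilon}$ outside $D$, contradicting Bessel's inequality (which bounds that norm by $CK$ independently of $\varepsilon$). If you want to salvage your dual framework, you would need to find a substitute for this physical-space mechanism that works with no structural information about $\Lambda$; the almost-periodicity/diffraction route you sketch is precisely where such arguments are known to become hard.
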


\begin{proof}
Assume by contradiction that $\Lambda\subset\mathbb{R}^2$ satisfies
that $E(\Lambda)$ is a Riesz basis for $L^{2}(D)$, with the Riesz
constant $K$ (the $K$ from (\ref{eq:def Riesz})) satisfying $K<\sqrt{\frac{1+\sqrt{5}}{2}}$.
%For convenience, throughout the proof, constants denoted by $c$ and
%$C$ will be allowed to depend the Riesz basis, and in particular on
%$K$ (and will probably diverge as $K$ approaches the limiting value). %By the the Paley-Wiener
%theorem, we may assume without loss of generality that
%$\Lambda\subset\frac{1}{N}\mathbb{Z}^{2}$ for some $N$.

Fix $\varepsilon>0$ and examine the function
\[
f=\frac{1}{\sqrt{\pi}\varepsilon}\mathbbm{1}_{\varepsilon\mathbb{D}}
\]
i.e.\ a normalised indicator of a disk of radius $\varepsilon$ around
0. Use the Riesz basis property to write
\[
f(x)=\sum_{\lambda\in\Lambda}c_{\lambda}e(\langle\lambda,x\rangle)
\]
with the sum converging in $L^{2}(D)$. As before, we use this representation
to extend $f$ to $[-2,2]^2$, though there will be two differences from
the previous theorem. First, it will be more convenient to not
distinguish between $f$ and its extension, and call the extension $f$
as well. And second, rather than using Paley-Wiener, this time we
use the fact $E(\Lambda)$ is a Riesz basis in $L^2(D)$ to conclude
that $\Lambda$ is uniformly separated and hence $E(\Lambda)$ satisfies Bessel's inequality
in $L^2([-2,2]^2)$, i.e 
\begin{equation}
||f||_{L^{2}([-2,2]^{2})}\le C\sum_{\lambda\in\Lambda}|c_{\lambda}|^{2}\le CK||f||_{D}^{2}=CK.\label{eq:f<=00003DN2K}
\end{equation}
%the whole of $[-N/2,N/2]^{2}$  and get that
For a $\theta\in[0,2\pi]$ denote $t_{\theta}=(\nicefrac{1}{\sqrt{\pi}}-\varepsilon)(\cos\theta,\sin\theta)$
and consider the translation of $f$ by $t_{\theta}$ i.e.
\[
g_{\theta}(x)\coloneqq\sum_{\lambda\in\Lambda}c_{\lambda}e(\langle\lambda,x-t_{\theta}\rangle).
\]
As before, we must have
\[
||g_{\theta}||_{L^{2}(D)}^{2}\le K\sum_{\lambda\in\Lambda}|c_{\lambda}e(\langle\lambda,-t_{\theta}\rangle)|^{2}=K\sum_{\lambda\in\Lambda}|c_{\lambda}|^{2}\le K^{2}.
\]
However, $g_{\theta}(x)=\frac{1}{\sqrt{\pi}\varepsilon}$ for a disk
of radius $\varepsilon$ contained in $D$. Hence
\[
\int_{D\setminus(D+t_{\theta})}|g_{\theta}|^{2}\le K^{2}-1.
\]
Translating back we get an estimate for (the extended) $f$ outside
$D$, namely
\begin{equation}
\int_{(D-t_{\theta})\setminus D}|f|^{2}\le K^{2}-1.\label{eq:good translation}
\end{equation}
This is our upper bound for $f$.

To get a lower bound we again examine one $\theta$, and this time
translate by $s_{\theta}=(\nicefrac{1}{\sqrt{\pi}}+\varepsilon)(\cos\theta,\sin\theta)$.
Denote the translated function by $h_{\theta}$, namely
\[
h_{\theta}(x)\coloneqq\sum_{\lambda\in\Lambda}c_{\lambda}e(\langle\lambda,x-s_{\theta}\rangle)
\]
and get
\[
||h_{\theta}||_{L^{2}(D)}^{2}\ge\frac{1}{K}\sum_{\lambda\in\Lambda}|c_{\lambda}|^{2}\ge\frac{1}{K^{2}}.
\]
which we again map back to $f$ to get
\[
\int_{(D-s_{\theta})\setminus D}|f|^{2}\ge\frac{1}{K^{2}}
\]
(note that we used in this last step that the little disk that allowed
to subtract 1 when we did the corresponding calculations for $g$ is
outside $D$ for $h$, so does not contribute to $||h||_{L^2(D)}$). With the lower bound (\ref{eq:good translation})
we get
\[
\int_{(D-s_{\theta})\setminus(D-t_{\theta})}|f|^{2}\ge\frac{1}{K^{2}}-(K^{2}-1).
\]
When $K<\sqrt{\frac{1+\sqrt{5}}{2}}$ we get that the right-hand side
is at least some constant $c>0$. Integrating over $\theta$ and
using Fubini gives (omitting the details of the elementary geometry
exercise involved)
%I got at radius $r$ that the intersection angle $\theta^+$ with the
%circle around $1/\sqrt{\pi}\pm\varepsilon$ satisfies
%$\cos\theta=\frac{\sqrt{\pi}}{2}r\pm\varepsilon(\frac{1}{x}-\frac{\pi}{2}x)
%+O(\varepsilon^2)$. I used the cosine theorem for that. So the
%difference between the cosines is $O(\varepsilon)$ until
%$x=\sqrt{2/\pi}+O(\varepsilon)$, where it zeroes out.
\[
  c\le \int_{0}^{2\pi}\int_{(D-s_{\theta})\setminus(D-t_{\theta})}|f(x)|^{2}\,dx\,d\theta
\le \int_{(2/\sqrt{\pi}+\varepsilon)\mathbbm{D}}C\sqrt{\varepsilon}|f(x)|^{2}\,dx
\]
Or, in other words,%Since the integrand is bounded above by $\sqrt{\varepsilon}$ we get
\[
\int_{(2/\sqrt{\pi}+\varepsilon)\mathbbm{D}}|f(x)|^{2}\ge\frac{c}{\sqrt{\varepsilon}}
\]
contradicting (\ref{eq:f<=00003DN2K}) if $\varepsilon$ is taken to be
sufficiently small.
\end{proof}

\end{document}